\newtheorem{theorem}{\textsc{Theorem}}[section]
\newtheorem{proposition}[theorem]{\textsc{Proposition}}
\newtheorem{corollary}[theorem]{\textsc{Corollary}}
\theoremstyle{definition}
\newtheorem{definition}[theorem]{\textsc{Definition}}
\theoremstyle{remark}
\newtheorem{remark}[theorem]{\textsc{Remark}}
\author{J. Kwon}
\date{\today}
\begin{document}
\title{A Universal Bound on the Variations of Bounded Convex Functions}
\subjclass[2010]{26B25, 52A05}
\keywords{Convex Functions, Variations, Funk Metric, Thompson Metric,
  Hilbert Metric}


\thanks{The author is grateful to Pierre-Antoine Corre, Rida Laraki, Sylvain Sorin and Guillaume Vigeral for their very helpful comments.}

\begin{abstract}
  
Given a convex set $C$ in a real vector space $E$ and two points $x,y\in C$, we investivate which are the possible
values for the variation $f(y)-f(x)$, where $f:C\longrightarrow [m,M]$ is a bounded
convex function. We then rewrite the bounds in terms of the Funk weak
metric, which will imply that a bounded convex function is
Lipschitz-continuous with respect to the Thompson and Hilbert metrics.
The bounds are also proved to be optimal. We also exhibit the
maximal subdifferential of a bounded convex function at a given point $x\in C$.
\end{abstract}
\maketitle
\section{The Variations of Bounded Convex Functions}
\label{sec-2}
\label{SEC:variations}
Let $C$ be a convex set of a real vector space $E$. Given two points $x,y\in C$, we define the
following auxiliary quantity:
\[ \tau_C(x,y)=\sup_{}\left\{ t\geqslant 1\,|\,x+t(y-x)\in C \right\}. \]
Clearly, $\tau_C$ takes values in $[1,+\infty]$. Intuitively, it measures how far away $x$ is from the boundary in the
direction of $y$, taking the ``distance'' $xy$ as unit. Clearly,
$\tau_C(x,y)=+\infty$ if and only if $x+\mathbb{R}_+(y-x)\subset
C$. Our first result is the following.



\begin{theorem}
\label{thm:bound}
Let $m\leqslant M$ be two real numbers. Let $C$ be a convex set of a
real vector space $E$ and
$f:C\longrightarrow [m,M]$ a convex function. For every couple of
points $(x,y)\in C^2$, $f$ satisfies:
\[ -\frac{M-m}{\tau_C(y,x)}\leqslant f(y)-f(x)\leqslant \frac{M-m}{\tau_C(x,y)}. \]
\end{theorem}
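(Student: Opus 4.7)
The plan is to prove the upper bound $f(y)-f(x)\leqslant \frac{M-m}{\tau_C(x,y)}$ directly from the definition of $\tau_C$ and the convexity of $f$; the lower bound will then follow by swapping the roles of $x$ and $y$, since $-(f(y)-f(x)) = f(x)-f(y) \leqslant \frac{M-m}{\tau_C(y,x)}$.

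For the upper bound, the key observation is geometric: if $t\geqslant 1$ is any admissible parameter in the sup defining $\tau:=\tau_C(x,y)$, so that the point $z_t := x+t(y-x)$ lies in $C$, then $y$ itself is a convex combination of $x$ and $z_t$, namely $y = (1-1/t)x + (1/t)z_t$. I would then invoke the convexity of $f$ to write
\[ f(y) \leqslant (1-1/t)f(x) + (1/t)f(z_t), \]
and rearrange to obtain
\[ f(y)-f(x) \leqslant (1/t)(f(z_t)-f(x)) \leqslant (1/t)(M-m), \]
using the bounds $f(z_t)\leqslant M$ and $f(x)\geqslant m$.

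To finish, I would take a sequence $t_n$ increasing to $\tau_C(x,y)$ with each $z_{t_n}\in C$ (the sup is approached by admissible values by definition), and pass to the limit in the inequality above to deduce $f(y)-f(x)\leqslant (M-m)/\tau_C(x,y)$. The case $\tau_C(x,y)=+\infty$ is handled by the same limiting argument, yielding $f(y)\leqslant f(x)$, which is consistent with the convention $(M-m)/(+\infty)=0$; the case $\tau_C(x,y)=1$ makes the right-hand side equal to $M-m$, which is trivial.

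I do not expect any serious obstacle: the whole argument rests on the single identity $y=(1-1/t)x+(1/t)z_t$, which is just a reparametrisation of the definition of $\tau_C$. The only point that requires a little care is that the sup in the definition of $\tau_C$ need not be attained, so the conclusion must be reached by letting $t\to \tau_C(x,y)$ rather than by plugging in $t=\tau_C(x,y)$ directly; this is a cosmetic issue rather than a genuine difficulty.
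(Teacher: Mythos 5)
Your proposal is correct and follows essentially the same route as the paper: the identity $y=(1-1/t)x+(1/t)(x+t(y-x))$ together with convexity and the bounds $m\leqslant f\leqslant M$, then letting $t$ tend to $\tau_C(x,y)$ along admissible values, with the lower bound obtained by exchanging $x$ and $y$. The only cosmetic difference is that the paper first normalises $f$ to take values in $[0,1]$, whereas you keep $m$ and $M$ explicit.
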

\begin{proof}
It is enough to prove the result for functions with values in $[0,1]$,
since we can consider $(M-m)^{-1}(f-m)$. Let $x,y$ be two points in $C$. Let $t$ be such
that $1\leqslant t< \tau_C(x,y)$. By definition of $\tau_C$, and
because $C$ is convex, we have $x+t(y-x)\in C$. We can write
$y$ as a convex combination of $x+t(y-x)$ and $x$ with coefficients $1/t$ and $(t-1)/t$ respectively:
\[ y=\frac{x+t(y-x)+(t-1)x}{t}. \]
By convexity of $f$, we get:
\begin{align*}
f(y)-f(x)&\leqslant \frac{f(x+t(y-x))+(t-1)f(x)}{t}-f(x)\\
&\leqslant \frac{f(x+t(y-x))-f(x)}{t}\leqslant \frac{1}{t},
\end{align*}
where the last inequality comes from the fact that $f$ has values in $[0,1]$. By taking the limit as $t\rightarrow \tau_C(x,y)$, we get:
\[ f(y)-f(x)\leqslant \frac{1}{\tau_C(x,y)}. \]
The lower bound is obtained by exchanging the roles of $x$ and $y$.
\end{proof}
\section{The Funk, Thompson and Hilbert Metrics}
\label{sec-3}
\label{SEC:metrics}
In this section, we rewrite the result from Theorem ~\ref{thm:bound} as a
Lipschitz-like property in the framework of convex sets in 
normed spaces. But $1/\tau_C$ is far from being a distance. We thus
consider the Funk, Thompson and
Hilbert metrics (which were introduced in
\cite{funk1929geometrien}, \cite{thompson1963certain} and \cite{hilbert1895gerade} respectively) and establish the link with $\tau_C$.

We restrict our framework to the case where $C$ is an open convex subset of a normed space $(E,\left\|\ \cdot\ 
\right\| )$. Let $x,y\in C$. If $\tau_C(x,y)<+\infty$, we can 
define $b(x,y)$ to be the following point:
\[ b(x,y)=x+\tau_C(x,y)(y-x). \]
Note that since $C$ is open, when $b(x,y)$ exists, it is necessarily
different from $y$. This will be necessary to state the following definitions.

\begin{definition}
Let $C$ be an open convex subset of a normed space $(E,\left\| \ \cdot\  \right\|)$. We define

\begin{enumerate}[(i)]
\item the Funk weak metric:
\[ F_C(x,y)=
\begin{cases}
\displaystyle \log \frac{\displaystyle \left\| x-b(x,y) \right\|}{\displaystyle \left\| y-b(x,y) \right\|}&\text{if $\tau_C(x,y)<+\infty$}\\
0&\text{otherwise}
\end{cases};
 \]
\item the Thompson pseudometric:
\[ T_C(x,y)=\max_{}\left( F_C(x,y),F_C(y,x) \right); \]
\item the Hilbert pseudometric:
\[ H_C(x,y)=\frac{1}{2}\left( F_C(x,y)+F_C(y,x) \right). \]
\end{enumerate}
\end{definition}

\begin{remark}
Even if we will abusively call them \emph{metrics}, they fail to
satisfy the separation axiom in general. The Thompson and the Hilbert
metrics are thus \emph{pseudometrics}. Moreover, the Funk metric not being
symmetric, it actually is a \emph{weak} metric. The Thompson and the Hilbert metrics are respectively the
\emph{max-symmetrization} and \emph{meanvalue-symmetrisation} of the Funk metric. For a detailed presentation of these notions, see e.g. \cite{papadopoulos2007weak}.
\end{remark}

We now establish the link between $\tau_C(x,y)$ and $F_C(x,y)$.
\begin{proposition}
\label{prop:a}
Let $C$ be an open convex subset of a normed space $(E,\| \cdot\ \|)$. For every points $x,y\in C$, the following equality holds:
\[ F_C(x,y)=-\log \left( 1-\frac{1}{\tau_C(x,y)} \right). \]
\end{proposition}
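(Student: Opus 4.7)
The proof should be a direct computation, so the plan is to explicitly evaluate the norms $\|x-b(x,y)\|$ and $\|y-b(x,y)\|$ that appear in the definition of $F_C$, and then simplify.

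First I would dispense with the degenerate case: if $\tau_C(x,y)=+\infty$, then the left-hand side is $0$ by definition, and the right-hand side is $-\log(1-0)=0$ as well, so there is nothing to prove. Assume therefore $\tau:=\tau_C(x,y)<+\infty$, so that $b(x,y)=x+\tau(y-x)$ is well-defined.

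Next I would compute the two vectors appearing under the norms. Directly from the definition of $b(x,y)$:
\[
x-b(x,y)=-\tau(y-x)=\tau(x-y),\qquad y-b(x,y)=(1-\tau)(y-x)=(\tau-1)(x-y).
\]
Since $C$ is open and $y\in C$, one can push slightly past $y$ along the ray from $x$, which forces $\tau>1$ strictly whenever $\tau$ is finite (this is precisely the observation made just before the definition, guaranteeing $b(x,y)\neq y$). In particular $\tau-1>0$ and $x\neq y$ (otherwise $\tau=+\infty$), so both norms are positive and we may take logs.

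Taking norms and substituting into the definition of $F_C$ gives
\[
F_C(x,y)=\log\frac{\tau\,\|x-y\|}{(\tau-1)\,\|x-y\|}=\log\frac{\tau}{\tau-1}=-\log\frac{\tau-1}{\tau}=-\log\!\left(1-\frac{1}{\tau}\right),
\]
which is the claimed identity. There is no real obstacle here; the only point requiring care is to check that the denominator $\|y-b(x,y)\|$ does not vanish, and this is exactly what the openness of $C$ guarantees.
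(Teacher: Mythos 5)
Your proof is correct and takes essentially the same route as the paper's: both handle the case $\tau_C(x,y)=+\infty$ separately and then compute $\|x-b(x,y)\|=\tau_C(x,y)\,\|x-y\|$ and $\|y-b(x,y)\|=(\tau_C(x,y)-1)\,\|x-y\|$ to reduce the ratio to $\bigl(1-1/\tau_C(x,y)\bigr)^{-1}$. Your added remarks that openness forces $\tau_C(x,y)>1$ and that $x\neq y$ in the finite case simply make explicit the non-vanishing of the denominator, which the paper notes just before the definition.
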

\begin{proof}
Let $x,y\in C$. If $\tau_C(x,y)=+\infty$, the right-hand side of the
above equality is zero, as expected. If $\tau_C(x,y)<+\infty$,
$\tau_C(x,y)$ can be expressed with the norm. Since by definition
$b(x,y)=x+\tau_C(x,y)(y-x)$, we have
\[ \tau_C(x,y)=\frac{\left\| x-b(x,y) \right\|}{\left\| x-y \right\|}\quad\text{and}\quad \tau_C(x,y)-1=\frac{\left\|y-b(x,y)\right\|}{\left\| x-y \right\|}.\]
And thus:
\[
 \frac{ \left\| x-b(x,y) \right\|}{ \left\| y-b(x,y) \right\|}= \left( 1- \frac{1}{\tau_C (x,y)} \right)^{-1}.
\]
Therefore,
\[ F_C(x,y)=-\log\left( 1-\frac{1}{\tau_C(x,y)}\right). \]
\end{proof}

By combining Theorem~\ref{thm:bound} and the above proposition, we
get the following corollary.

\begin{corollary}
\label{cor:a}
Let $C$ an open convex subset of a normed space $(E,\left\| \ \cdot\
\right\| )$ and $f:C\longrightarrow [m,M]$ be a convex function. Then, for all $x,y\in C$, the following bounds hold.
\begin{enumerate}[(i)]
\item $\displaystyle  -(M-m)\left(1-e^{-F_C(y,x)}\right)\leqslant f(y)-f(x)\leqslant (M-m)\left(1-e^{-F_C(x,y)}\right). $

\item $\displaystyle  \left| f(y)-f(x) \right| \leqslant (M-m)\left( 1-e^{-T_C(x,y)} \right). $

\item $\displaystyle  \left| f(y)-f(x) \right| \leqslant (M-m)\left( 1-e^{-2H_C(x,y)} \right). $
\end{enumerate}
\end{corollary}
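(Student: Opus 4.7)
The plan is to derive all three inequalities by feeding the identity from Proposition~\ref{prop:a} into Theorem~\ref{thm:bound} and then exploiting the monotonicity of $t\mapsto 1-e^{-t}$ together with the nonnegativity of $F_C$. Rearranging Proposition~\ref{prop:a} yields
\[
\frac{1}{\tau_C(x,y)} \;=\; 1-e^{-F_C(x,y)},
\]
which holds as well in the convention $\tau_C(x,y)=+\infty$ (in which case both sides are $0$). Substituting this into the two-sided bound of Theorem~\ref{thm:bound}, applied at $(x,y)$ for the upper estimate and at $(y,x)$ for the lower one, immediately gives item (i).

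For items (ii) and (iii) I would first record that $F_C$ takes values in $[0,+\infty]$: this is clear from $\tau_C\geqslant 1$ via Proposition~\ref{prop:a}, and it is what makes the symmetrizations $T_C$ and $H_C$ dominate both $F_C(x,y)$ and $F_C(y,x)$. Since $t\mapsto 1-e^{-t}$ is increasing on $[0,+\infty]$, the bound $F_C(x,y)\leqslant T_C(x,y)$ together with $F_C(y,x)\leqslant T_C(x,y)$ turns both estimates of (i) into
\[
|f(y)-f(x)|\;\leqslant\;(M-m)\bigl(1-e^{-T_C(x,y)}\bigr),
\]
which is (ii). For (iii), I would use $F_C(x,y)\leqslant F_C(x,y)+F_C(y,x)=2H_C(x,y)$ (and symmetrically for $F_C(y,x)$), where the inequality uses $F_C(y,x)\geqslant 0$; the same monotonicity argument then yields the claimed bound.

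I don't foresee any serious obstacle: the content of the corollary is really just a repackaging of Theorem~\ref{thm:bound} through Proposition~\ref{prop:a}. The only point requiring a moment of care is the sign check $F_C\geqslant 0$, because without it the inequality $e^{-F_C(x,y)}\geqslant e^{-2H_C(x,y)}$ used in (iii) would not follow from $F_C(x,y)\leqslant 2H_C(x,y)$. Once this is noted, the three items follow in a single short display each.
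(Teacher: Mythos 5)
Your proposal is correct and follows exactly the route the paper intends (the paper states the corollary as an immediate combination of Theorem~\ref{thm:bound} and Proposition~\ref{prop:a}): substitute $1/\tau_C=1-e^{-F_C}$ into the two-sided bound, then use $F_C\geqslant 0$ and the monotonicity of $t\mapsto 1-e^{-t}$ to pass to $T_C$ and $2H_C$. Nothing is missing.
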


\begin{remark}
From (ii), by using the inequality $e^{-s}\geqslant 1-s$, we get:
\begin{align*}
\left| f(x)-f(y) \right|&\leqslant (M-m)\left( 1-e^{-T_C(x,y)} \right)\\
&\leqslant (M-m)T_C(x,y),
\end{align*}
and similarly for (iii). Every convex function $f:C\longrightarrow [m,M]$ is thus $(M-m)$-Lipschitz (resp. $2(M-m)$-Lipschitz) with respect to the Thompson metric (resp. the Hilbert metric).
\end{remark}
\section{Optimality of the Bounds}
\label{sec-4}
\label{SEC:tight}
We show in this section that the bounds obtained in Theorem~\ref{thm:bound} are
optimal in the following sense. For a given convex set, and for a given
couple a points, there is a function which attains the upper bound
(resp. the lower bound). In other words, for $x,y\in C$:
\begin{equation*}
\begin{cases}
\displaystyle \max_{\substack{f:C\longrightarrow [m,M]\\f\text{ convex}}}\left( f(y)-f(x) \right)=\frac{M-m}{\tau_C(x,y)}\\
\displaystyle \min_{\substack{f:C\longrightarrow [m,M]\\f\text{ convex}}}\left( f(y)-f(x) \right)=-\frac{M-m}{\tau_C(y,x)}.
\end{cases}
\end{equation*}

In the proof of the following theorem, it will be very convenient to extend the
notion of convexity to functions defined on $C$ and taking values in $\mathbb{R}\cup \left\{-\infty
\right\}$ (and not $\mathbb{R}\cup \{+\infty\}$).
Obviously, the
result according to which the upper envelope of two convex functions
is also a convex function remains true.

\begin{theorem}
\label{thm:tight}
Let $m\leqslant M$ be two real numbers. Let $C$ be a convex set of a
real vector space $E$. For every couple of points $(x,y)\in C^2$, there exists a convex function $f:C\longrightarrow [m,M]$ (resp. $g:C\longrightarrow [m,M]$) such that the upper bound (resp. lower bound) of Theorem~\ref{thm:bound} is attained; in other words:
\[ f(y)-f(x)=\frac{M-m}{\tau_C(x,y)}\qquad \left( \text{resp.}\  g(y)-g(x)=-\frac{M-m}{\tau_C(y,x)} \right).\]
\end{theorem}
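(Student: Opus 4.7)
As in the proof of Theorem~\ref{thm:bound}, by considering $(M-m)^{-1}(f-m)$ I may assume $m=0$ and $M=1$. Fix $x,y\in C$ and set $\tau:=\tau_C(x,y)$. If $\tau=+\infty$ any constant function attains the bound, so I assume $\tau<+\infty$. By the obvious symmetry, it suffices to construct a convex function $f:C\to [0,1]$ with $f(x)=0$ and $f(y)=1/\tau$; applying the same construction after swapping $x$ and $y$ will then yield the extremal $g$ for the lower bound.

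My proposed extremal function is the (shifted) \emph{Minkowski gauge} of $C$ based at $x$. Define $p:E\to[0,+\infty]$ by
\[
p(v)=\inf\{s>0\mid x+s^{-1}v\in C\},
\]
with the convention $\inf\emptyset=+\infty$, and set $f(z)=p(z-x)$ for $z\in C$. I would then verify three claims in turn. First, $p$ is positively homogeneous and subadditive, and hence convex: positive homogeneity is immediate, while for subadditivity, if $v_i=s_i(c_i-x)$ with $s_i>0$ and $c_i\in C$, then $v_1+v_2=(s_1+s_2)(c-x)$ where $c:=(s_1c_1+s_2c_2)/(s_1+s_2)\in C$ by convexity, whence $p(v_1+v_2)\leqslant s_1+s_2$. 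Second, every $z\in C$ satisfies $p(z-x)\leqslant 1$ (take $s=1$) and $p(0)=0$, so $f$ sends $C$ into $[0,1]$ and $f(x)=0$. Third, the equivalence $x+(1/s)(y-x)\in C \Leftrightarrow 1/s\leqslant \tau$ immediately gives $p(y-x)=1/\tau$, so $f(y)=1/\tau$. Combining these three claims, $f(y)-f(x)=1/\tau_C(x,y)$, attaining the upper bound of Theorem~\ref{thm:bound}.

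Applying the same construction with the roles of $x$ and $y$ exchanged produces a convex $g:C\to[0,1]$ with $g(y)=0$ and $g(x)=1/\tau_C(y,x)$, so that $g(y)-g(x)=-1/\tau_C(y,x)$, attaining the lower bound. The only point that requires genuine care is the handling of the value $+\infty$ in the definition of $p$ when proving sublinearity on all of $E$; on $C-x$ itself $p$ is always in $[0,1]$, so that $f$ is a bona fide $[0,1]$-valued convex function on $C$. Allowing convex functions to take extended real values, as the remark preceding the statement suggests, makes this bookkeeping notationally clean. No other subtleties appear to arise, so the main work is the one-line subadditivity verification above.
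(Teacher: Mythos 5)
Your construction is correct, but it goes by a genuinely different route than the paper's proof of this theorem. The paper fixes the vector $u=\tau_C(x,y)(y-x)$, defines $\sigma(z)=\sup\{t\geqslant 0\mid z+tu\in C\}$, proves that $z\mapsto 1-\sigma(z)$ is convex with values in $[-\infty,1]$, and takes $f=\max(1-\sigma,0)$; the computation $\sigma(x)=1$, $\sigma(y)=(\tau-1)/\tau$ then yields the same increment. Your extremizer is instead the gauge $f=g_{C,x}$, i.e.\ exactly the function the paper introduces in its final section on the maximal subdifferential, where it is shown that $g_{C,x_0}(\cdot)=1/\tau_C(x_0,\cdot)$. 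Your route buys two things: convexity comes from the classical sublinearity argument for gauges (your one-line subadditivity check is fine, including the extended-value bookkeeping), and the single function $g_{C,x}$ attains the upper bound simultaneously for \emph{every} $y\in C$ with the same base point $x$, which is slightly stronger than the statement; it also ties Theorem~\ref{thm:tight} directly to the subdifferential result. The paper's construction, by contrast, depends on the pair $(x,y)$ but makes the geometric picture explicit: $f$ is affine along lines parallel to $y-x$ where it is positive and vanishes on the far side of $x$.

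One wording fix: the equivalence $x+(1/s)(y-x)\in C \Leftrightarrow 1/s\leqslant \tau$ is not literally true, since the point at parameter exactly $\tau$ need not belong to $C$ when the supremum defining $\tau_C(x,y)$ is not attained (e.g.\ $C$ open). This is harmless for your argument: the set $\{t>0\mid x+t(y-x)\in C\}$ is an interval containing $(0,\tau)$ and contained in $(0,\tau]$, so the infimum defining $p(y-x)$ equals $1/\tau$ in either case; you should phrase the third claim that way rather than as an equivalence.
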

\begin{proof}
Let $x$ and $y$ be two points in $C$, and let us construct a convex
function $f:C\longrightarrow [0,1]$ satisfying the equality. If $\tau_C(x,y)=+\infty$, the bound is zero, and $f=0$ is adequate. From now on, we assume that $\tau_C(x,y)<+\infty$.
The idea of the construction is the following. Let us first consider
the line through $x$ and $y$. We want $f$ to increase from $0$ at $x$
to $1$ at the boundary in the direction of $y$, in an affine way; and
to be equal to zero in the other direction.
Then, we will have to extend $f$ to all $C$ in a convex way.
Let $\Vec{u}=\tau_C(x,y)(y-x)$. For every $z\in C$, let us define $\sigma(z)=\sup_{}\left\{ t\geqslant 0\,|\,z+t\Vec{u}\in C \right\}$. $\sigma$ clearly takes values in $[0,+\infty]$. Consider the following function.
\[ \begin{array}{cccc}
\phi:& C &\longrightarrow& [-\infty,1] \\
& z &\longmapsto& 1-\sigma(z)
\end{array}. \]
Let us prove that $\phi$ is convex. Let $z_1$ and $z_2$ be two points in $C$ and $z_3=\lambda z_1+(1-\lambda)z_2$ (with $\lambda\in (0,1)$) a convex combination.
By definition of $\sigma$, if we take two real numbers $s_1$ and $s_2$ such that $0\leqslant s_1\leqslant \sigma(z_1)$ and $0\leqslant s_2\leqslant \sigma(z_2)$, we have:
\[ 
\begin{cases}
z_1+s_1\Vec{u}\in C\\
z_2+s_2\Vec{u}\in C.
\end{cases}
 \]
And thus, the convex combination of these two points with coefficients $\lambda$ and $1-\lambda$ also belongs to $C$:
\[ \lambda(z_1+s_1\Vec{u})+(1-\lambda)(z_2+s_2\Vec{u})\in C. \]
This point can be rewritten with $z_3$:
\[ z_3+\left( \lambda s_1+(1-\lambda)s_2 \right)\Vec{u}\in C. \]
By definition of $\sigma(z_3)$, we have $\lambda s_1+(1-\lambda)s_2\leqslant \sigma(z_3)$. This inequality is true for every $s_1\leqslant \sigma(z_1)$ and $s_2\leqslant z(s_2)$. Consequently:
\[ \lambda \sigma(z_1)+(1-\lambda)\sigma(z_2)\leqslant \sigma(z_3). \]
We can now prove the convexity inequality.
\begin{align*}
\phi(z_3)=1-\sigma(z_3)&\leqslant 1-\left( \lambda \sigma(z_1)+(1-\lambda)\sigma(z_2) \right)\\
&=\lambda(1-\sigma(z_1))+(1-\lambda)(1-\sigma(z_2))\\
&=\lambda \phi(z_1)+(1-\lambda)\phi(z_2).
\end{align*}

We now choose $f=\max_{}(\phi,0)$. Since $\phi\leqslant 1$, $f$ takes values in $[0,1]$. Let us prove that $f$ satisfies the desired equality. Let us compute $f(x)$ and $f(y)$. 
\begin{align*}
\sigma(x)&=\sup_{}\left\{ t\geqslant 0\,|\,x+t\Vec{u}\in C \right\}\\
&=\sup_{}\left\{ t\geqslant 0\,|\,x+t\tau_C(x,y)(y-x)\in C \right\} \\
&=\frac{1}{\tau_C(x,y)}\sup_{}\left\{ t'\geqslant 0\,|\,x+t'(y-x)\in C \right\}\\
&=\frac{1}{\tau_C(x,y)}\tau_C(x,y)\\
&=1.
\end{align*}
Thus $\phi(x)=1-\sigma(x)=0$ and $f(x)=\max_{}(0,0)=0$. Similarly, we
can prove:
\[ \sigma(y)=\frac{\tau_C(x,y)-1}{\tau_C(x,y)}, \]
and thus, $\phi(y)=1-\sigma(y)=\tau_C(x,y)^{-1}$ and $f(y)=\max_{}(\tau_C(x,y)^{-1},0)=\tau_C(x,y)^{-1}$. We finally get:
\[ f(y)-f(x)=\frac{1}{\tau_C(x,y)}. \]
The construction of $g$ is analogous.
\end{proof}

\section{The Maximal Subdifferential}
\label{sec:impl-restr-subd}

In the case of a nonempty convex subset $C\subset\mathbb{R}^n$, and a given
point $x_0\in C$, we wonder what is the maximal subdifferential at $x_0$ (in
the sense of inclusion) for a function $f:C\longrightarrow
[m,M]$. We will prove that there \emph{is} a maximal one, and will
express it in terms of the subdifferential of a translation of the
Minkowski gauge. For each $x_0\in C$, we define $g_{C,x_0}:C\longrightarrow [0,1]$ by
\[ g_{C,x_0}(x)=\inf\left\{ \lambda>0\,|\,x-x_0\in \lambda(C-x_0)  \right\}.\]

This function is obviously well-defined, and can be seen as a Minkowski
gauge centered in $x_0$ and restricted to $C$. It is well-known fact
that the Minkowski gauge is a convex function. So is this one.

\begin{theorem}
  Let $C$ be a nonempty convex subset of $\mathbb{R}^n$ and $x\in
  C$. We have
\[ \max_{\substack{f:C\longrightarrow [m,M]\\f\text{convex}}}\partial
f(x)=(M-m)\partial g_{C,x}(x), \]
where the maximum is understood in the sense of inclusion.
\end{theorem}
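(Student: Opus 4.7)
The core observation that makes the two sides of the stated equality match is the identity
\[
 g_{C,x}(y)=\frac{1}{\tau_C(x,y)}\qquad \text{for all } y\in C,
\]
with the convention $1/(+\infty)=0$. I would establish this first: starting from the definition of $g_{C,x}(y)$, the substitution $\mu=1/\lambda$ turns the condition ``$y-x\in\lambda(C-x)$'' into ``$x+\mu(y-x)\in C$'', so $g_{C,x}(y)=1/\sup\{\mu>0\colon x+\mu(y-x)\in C\}$. Since $y\in C$ the sup is at least $1$, and convexity of $C$ (together with $x\in C$) makes the set of admissible $\mu$ an interval, so that sup coincides exactly with $\tau_C(x,y)$. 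Note also that this identity already yields $g_{C,x}(x)=0$, which will be used repeatedly.

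Next I would verify the ``$\supseteq$'' direction by exhibiting a convex function whose subdifferential at $x$ equals $(M-m)\partial g_{C,x}(x)$. The natural candidate is $f_0:=m+(M-m)g_{C,x}$. Since $g_{C,x}$ is a (restricted) Minkowski gauge it is convex and takes values in $[0,1]$, hence $f_0$ is a convex function $C\to[m,M]$. Because adding a constant does not affect the subdifferential and positive scaling scales it, one has $\partial f_0(x)=(M-m)\partial g_{C,x}(x)$. Thus $(M-m)\partial g_{C,x}(x)$ is indeed attained by a convex function.

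For the ``$\subseteq$'' direction, let $f:C\to[m,M]$ be convex and $p\in\partial f(x)$. By the subgradient inequality and Theorem~\ref{thm:bound} applied to $f$:
\[
 \langle p,y-x\rangle\leqslant f(y)-f(x)\leqslant \frac{M-m}{\tau_C(x,y)}=(M-m)\,g_{C,x}(y)
\]
for every $y\in C$, where the last equality uses the identity above. Since $g_{C,x}(x)=0$, this inequality rewrites as
\[
 (M-m)g_{C,x}(y)\geqslant (M-m)g_{C,x}(x)+\langle p,y-x\rangle,
\]
i.e.\ $p\in (M-m)\partial g_{C,x}(x)$. This is precisely the inclusion we want, so the two sets agree.

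The only step with any real content is the identity $g_{C,x}(y)=1/\tau_C(x,y)$; the rest is formal manipulation of subgradients. I would pay attention to the edge case $\tau_C(x,y)=+\infty$ (which corresponds to $y-x$ being a recession direction from $x$): here the sup over $\mu>0$ is infinite, the infimum defining $g_{C,x}(y)$ is $0$, and both the bound in Theorem~\ref{thm:bound} and the subgradient inequality reduce to $\langle p,y-x\rangle\leqslant 0$, which is consistent.
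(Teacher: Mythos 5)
Your proposal is correct and takes essentially the same route as the paper: the key identity $g_{C,x}(y)=1/\tau_C(x,y)$, then the subgradient inequality combined with Theorem~\ref{thm:bound} for the inclusion $\partial f(x)\subseteq (M-m)\partial g_{C,x}(x)$, and the gauge itself as the function attaining the maximum. The only cosmetic differences are that you keep general $m,M$ rather than normalizing to $[0,1]$ and you write out the rescaled witness $m+(M-m)g_{C,x}$ explicitly, which the paper leaves implicit.
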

\begin{proof}
  Let us first relate $g_{C,x_0}$ to $\tau$. Let $x_0,x\in C$. We have
\begin{align*}
  g_{C,x_0}(x)&=\inf\left\{ \lambda>0\,|\,x-x_0\in \lambda(C-x_0)
  \right\}\\
  &=\sup\left\{ t>0\,|\, x-x_0\in \frac{1}{t}(C-x_0)  \right\}^{-1}\\
  &=\sup\left\{ t>0 \,|\, x_0+t(x-x_0)\in C  \right\}^{-1}\\
  &=\frac{1}{\tau(x_0,x)}.
\end{align*}

  Let us prove the result in the case $m=0$ and $M=1$, from which the
  general case follows immediately. Let $f:C\longrightarrow [0,1]$ be
  a convex function and $x_0\in C$. Let us show that $\partial
  f(x_0)\subset\partial g_{C,x_0}(x_0)$. This is true if $\partial
  f(x_0)$ is empty. Otherwise, let $\zeta\in \partial f(x_0)$. For
  every $x\in C$, we have
\begin{align*}
  \left< \zeta\middle| x-x_0\right> \leqslant  f(x)-f(x_0) &\leqslant\frac{1}{\tau(x_0,x)} \\
  &=g_{C,x_0}(x) =g_{C,x_0}(x)-g_{C,x_0}(x_0),
\end{align*}
where we used Theorem~\ref{thm:bound} for the second inequality. If
$x\not\in C$, the equality also holds, since $g_{C,x_0}(x)=+\infty$. We
thus have $\partial f(x_0)\subset \partial g_{C,x_0}(x_0)$. We
conclude by saying that $g_{C,x_0}$ is a convex function on $C$ with
values in $[0,1]$.
\end{proof}

\bibliography{variations}

\begin{thebibliography}{1}

\bibitem{funk1929geometrien}
{\sc P.~Funk}, {\em {\"U}ber geometrien, bei denen die geraden die
  k{\"u}rzesten sind}, Mathematische Annalen, 101 (1929), pp.~226--237.

\bibitem{hilbert1895gerade}
{\sc D.~Hilbert}, {\em {\"U}ber die gerade linie als k{\"u}rzeste verbindung
  zweier punkte}, Mathematische Annalen, 46 (1895), pp.~91--96.

\bibitem{papadopoulos2007weak}
{\sc A.~Papadopoulos and M.~Troyanov}, {\em Weak metrics on {E}uclidean
  domains}, JP Journal of Geometry and Topology, 7 (2007), pp.~23--44.

\bibitem{thompson1963certain}
{\sc A.~C. Thompson}, {\em On certain contraction mappings in a partially
  ordered vector space}, Proceedings of the American Mathematical Society, 14
  (1963), pp.~438--443.

\end{thebibliography}
\bibliographystyle{siam}
\bigskip
\bigskip
\bigskip

\begin{center}
\textsc{Joon Kwon\\[8pt]
Institut de math\'{e}matiques de Jussieu\\
\'{E}quipe combinatoire et optimisation\\
Universit\'{e} Pierre-et-Marie-Curie\\
4 place Jussieu\\
75252 Paris cedex 05 -- FRANCE\\[8pt]
e-mail: \textnormal{\texttt{joon.kwon@ens-lyon.org}}}
\end{center}

\end{document}